\rm \setlength{\textwidth}{160mm}
\newtheorem{theorem}{Theorem}
\newtheorem{lemma}{Lemma}
\newtheorem{conj}{Conjecture}
\title{\bf Solutions to conjectures on the\\ $(k,\ell)$-rainbow index of
complete graphs\footnote{Supported by NSFC No. 11071130 and the ``973'' project.}}
\author{\small Qingqiong Cai, Xueliang Li, Jiangli Song\\
{\small Center for Combinatorics and LPMC-TJKLC}\\ {\small Nankai
University}\\ {\small Tianjin 300071, China}\\ {\small Email:
cqqnjnu620@163.com, lxl@nankai.edu.cn,
songjiangli@mail.nankai.edu.cn}}
\date{}
\begin{document}
\maketitle

\begin{abstract}
The $(k,\ell)$-rainbow index $rx_{k, \ell}(G)$ of a graph $G$ was
introduced by Chartrand et. al. For the complete graph $K_n$ of
order $n\geq 6$, they showed that $rx_{3,\ell}(K_n)=3$ for
$\ell=1,2$. Furthermore, they conjectured that for every positive
integer $\ell$, there exists a positive integer $N$ such that
$rx_{3,\ell}(K_{n})=3$ for every integer $n \geq N$. More generally,
they conjectured that for every pair of positive integers $k$ and
$\ell$ with $k\geq 3$, there exists a positive integer $N$ such that
$rx_{k,\ell}(K_{n})=k$ for every integer $n \geq N$. This paper is
to give solutions to these conjectures.
\end{abstract}

\noindent{\bf Keywords:} rainbow connectivity; rainbow tree; rainbow
index.

\noindent{\bf AMS subject classification 2010:} 05C40, 05C05, 05C15, 05D40.

\section {\large Introduction}

All graphs in this paper are undirected, finite and simple. We
follow \cite{Bondy} for graph theoretical notation and terminology
not described here. Let $G$ be a nontrivial connected graph with an
\emph{edge-coloring} $c: E(G)\rightarrow\{1, 2,\cdots, t\}, t \in
\mathbb{N}$, where adjacent edges may be colored the same. A path is
said to be a \emph{rainbow path} if no two edges on the path have
the same color. An edge-colored graph $G$ is called \emph{rainbow
connected} if for every pair of distinct vertices of $G$ there
exists a rainbow path connecting them. The \emph{rainbow connection
number} of a graph $G$, denoted by $rc(G)$, is defined as the
minimum number of colors that are needed in order to make $G$
rainbow connected. The \emph{rainbow $k$-connectivity} of $G$,
denoted by $rc_{k}(G)$, is defined as the minimum number of colors
in an edge-coloring of $G$ such that every two distinct vertices of
$G$ are connected by $k$ internally disjoint rainbow paths. These
concepts were introduced by Chartrand et. al. in \cite{ChartrandGL}.
Recently, there have been published a lot of results on the rainbow
connections. We refer the readers to \cite{LiSun} \cite{LiSun3} for
details.

Similarly, a tree $T$ in $G$ is called a \emph{rainbow tree} if no
two edges of $T$ have the same color. For $S\subseteq V(G)$, a
\emph{rainbow $S$-tree} is a rainbow tree connecting the vertices of
$S$. Suppose that $\{T_{1},T_{2},\cdots, T_{\ell}\}$ is a set of
rainbow $S$-trees. They are called \emph{internally disjoint} if
$E(T_{i})\cap E(T_{j})=\emptyset$ and $V(T_{i})\bigcap V(T_{j})=S$
for every pair of distinct integers $i,j$ with $1\leq i,j\leq \ell$
(Note that the trees are vertex-disjoint in $G\setminus S$). Given
two positive integers $k$, $\ell$ with $k\geq 2$, the
\emph{$(k,\ell)$-rainbow index} $rx_{k,\ell}(G)$ of $G$ is the
minimum number of colors needed in an edge-coloring of $G$ such that
for any set $S$ of $k$ vertices of $G$, there exist $\ell$
internally disjoint rainbow $S$-trees. In particular, for $\ell=1$,
we often write $rx_{k}(G)$ rather than $rx_{k,1}(G)$ and call it the
\emph{$k$-rainbow index}. It is easy to see that
$rx_{2,\ell}(G)=rc_{\ell}(G)$. So the $(k,\ell)$-rainbow index can
be viewed as a generalization of the rainbow connectivity. In the
sequel, we always assume $k\geq 3$. The concept of
$(k,\ell)$-rainbow index was also introduced by Chartrand et. al. in
\cite{Chartrand}. They determined the $k$-rainbow index of all
unicyclic graphs and the $(3,\ell)$-rainbow index of complete graphs
for $\ell=1,2$. In the end of \cite{Chartrand}, they proposed the
following two conjectures:

\begin{conj}\label{conj1}
For every positive integer $\ell$, there exists a positive integer $N$
such that $rx_{3,\ell}(K_{n})=3$ for every integer $n \geq N$.
\end{conj}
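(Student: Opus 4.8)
\medskip

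The lower bound $rx_{3,\ell}(K_n)\ge 3$ is already in hand: it is monotone in $\ell$, so it follows from $rx_{3,1}(K_n)=3$ (proved in \cite{Chartrand}); alternatively, a $2$-colouring of $K_n$ with $n\ge 6$ contains a monochromatic triangle by Ramsey's theorem, and the vertex set of such a triangle admits no rainbow tree at all, since a tree connecting three vertices is either a $2$-edge path on those vertices or has at least three edges. So the entire task is the upper bound: for every $\ell$ and every sufficiently large $n$, produce a $3$-edge-colouring of $K_n$ in which every $3$-set $S=\{u,v,w\}$ has $\ell$ internally disjoint rainbow $S$-trees.

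The plan is to obtain all of these trees as stars. For $x\notin S$ let $T_x$ be the star with centre $x$ and leaves $u,v,w$; then $T_x$ is a rainbow $S$-tree precisely when the three colours $c(xu),c(xv),c(xw)$ are pairwise distinct, i.e.\ form a permutation of $\{1,2,3\}$. For distinct $x,y\notin S$ the stars $T_x,T_y$ are internally disjoint, since $V(T_x)\cap V(T_y)=S$ and their edge sets are incident to the distinct vertices $x$ and $y$ respectively. Hence it suffices to find a $3$-colouring in which every $3$-set $S$ has at least $\ell$ \emph{good centres}, a good centre being a vertex $x\notin S$ with $\{c(xu),c(xv),c(xw)\}=\{1,2,3\}$.

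I would establish this by the probabilistic method. Colour every edge of $K_n$ independently and uniformly at random from $\{1,2,3\}$. For a fixed $3$-set $S$ and a fixed $x\notin S$, the probability that $x$ is a good centre for $S$ equals $3!/3^3=2/9$, and over the $n-3$ choices of $x$ these events are mutually independent, since they depend on the pairwise disjoint edge triples $\{xu,xv,xw\}$. Thus the number of good centres of $S$ is distributed as $\mathrm{Bin}(n-3,2/9)$, whose mean $\tfrac{2}{9}(n-3)$ exceeds $2\ell$ once $n$ is large; a Chernoff bound then shows that $S$ has fewer than $\ell$ good centres with probability at most $e^{-cn}$ for an absolute constant $c>0$. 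A union bound over the fewer than $n^3$ triples $S$ gives that the random colouring fails with probability at most $n^3e^{-cn}<1$ for all $n\ge N(\ell)$. For such $n$ a suitable $3$-colouring exists, so $rx_{3,\ell}(K_n)\le 3$, and with the lower bound $rx_{3,\ell}(K_n)=3$.

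This is essentially the whole argument; the only points needing care are the independence of the good-centre events and the (routine) concentration estimate, and the Chernoff computation even yields an explicit, if far from optimal, $N(\ell)$. The genuinely harder problem, and presumably the real content of the paper, will be Conjecture~2: the same idea suggests a uniformly random colouring from $\{1,\dots,k\}$ together with stars $K_{1,k}$, each rainbow with probability $k!/k^k$, but one must then also handle the $\ell$ internally disjoint trees for ``locally bad'' $k$-sets and, separately, verify that the lower bound is exactly $k$ rather than $k-1$, which is more delicate.
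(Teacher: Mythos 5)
Your proposal is correct and follows essentially the same route as the paper: a uniformly random $3$-colouring, rainbow stars $T_x$ centred at the $n-3$ vertices outside $S$ as the internally disjoint $S$-trees, independence of the ``good centre'' events, a concentration bound on the binomial count, and a union bound over all $3$-sets, with the lower bound coming from Ramsey's theorem (a monochromatic triangle in any $2$-colouring of $K_n$, $n\ge 6$, admits no rainbow $S$-tree). The only cosmetic difference is that the paper's first proof bounds the binomial lower tail directly by $\binom{n-3}{\ell-1}(1-p)^{n-3-\ell+1}$ and reserves the Chernoff bound for its sharper estimate $N=\frac92\ell+o(\ell)$ in Section~3, which your sketch essentially reproduces.
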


\begin{conj}\label{conj2}
For every pair of positive integers  $k,\ell$ with $k\geq 3$, there
exists a positive integer $N$ such that $rx_{k,\ell}(K_{n})=k$ for
every integer $n \geq N$.
\end{conj}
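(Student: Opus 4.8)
The plan is to prove the single statement that subsumes both conjectures (Conjecture~\ref{conj1} being the case $k=3$): for every $k\ge 3$ and every $\ell\ge 1$ there is an integer $N=N(k,\ell)$ such that $rx_{k,\ell}(K_n)=k$ for all $n\ge N$. I would prove the two inequalities separately. The lower bound $rx_{k,\ell}(K_n)\ge k$ (for all large $n$) should follow from a Ramsey-type argument, and the upper bound $rx_{k,\ell}(K_n)\le k$ (for all large $n$) from the probabilistic method; the final $N(k,\ell)$ is then the maximum of the two thresholds.

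For the lower bound, note first that $rx_{k,\ell}(K_n)\ge rx_{k,1}(K_n)$, so it suffices to rule out $k-1$ colours already for $\ell=1$. Let $n$ be at least the $(k-1)$-colour Ramsey number of $K_k$ and fix any edge-colouring of $K_n$ with $k-1$ colours; it contains a monochromatic $K_k$, say on vertex set $S$. Any rainbow tree uses at most $k-1$ colours, hence at most $k-1$ edges, hence spans at most $k$ vertices; a rainbow $S$-tree must therefore have vertex set exactly $S$ and be a spanning tree of $K_n[S]$. But every such spanning tree has $k-1\ge 2$ edges, all of the same colour, so it is not rainbow. Hence $S$ admits no rainbow $S$-tree, $k-1$ colours do not suffice, and $rx_{k,\ell}(K_n)\ge k$. (For $k=3$ this recovers the lower bound of Chartrand et al.; the argument simply generalises it.)

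For the upper bound, colour each edge of $K_n$ independently and uniformly at random from $\{1,\dots,k\}$. Fix a $k$-set $S$. Call a vertex $u\notin S$ \emph{useful for $S$} if the $k$ edges joining $u$ to $S$ receive all $k$ distinct colours; this happens with probability $p_k:=k!/k^k$, a positive constant depending only on $k$, and the events ``$u$ useful for $S$'' are independent over the $n-k$ vertices $u\notin S$, since they involve disjoint edge sets. If $u_1,\dots,u_\ell$ are useful for $S$, then the stars $T_i=\{u_is:s\in S\}$ are $\ell$ rainbow $S$-trees, and they are internally disjoint: their edge sets are pairwise disjoint (all edges of $T_i$ meet $u_i$) and $V(T_i)\cap V(T_j)=S$ for $i\ne j$. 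Thus $S$ fails to have $\ell$ internally disjoint rainbow $S$-trees only if fewer than $\ell$ vertices are useful for $S$, an event of probability at most $\sum_{i=0}^{\ell-1}\binom{n-k}{i}p_k^{\,i}(1-p_k)^{n-k-i}=O\!\left(n^{\ell-1}(1-p_k)^{n}\right)$ for fixed $k,\ell$. A union bound over the $\binom{n}{k}=O(n^k)$ choices of $S$ shows that the probability some $k$-set is bad is $O\!\left(n^{k+\ell-1}(1-p_k)^{n}\right)\to 0$. Hence for all sufficiently large $n$ there is a $k$-colouring of $K_n$ in which every $k$-set $S$ admits $\ell$ internally disjoint rainbow $S$-trees, i.e.\ $rx_{k,\ell}(K_n)\le k$. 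Combining the two bounds gives $rx_{k,\ell}(K_n)=k$ for all $n\ge N(k,\ell)$, proving both conjectures.

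I do not expect a deep obstacle here; the conceptual crux of the upper bound is the observation that it suffices to use rainbow \emph{stars} centred at Steiner vertices outside $S$ — these are automatically internally disjoint when centred at distinct vertices — so the problem collapses to a per-vertex ``coupon-collector'' event whose success probability $p_k$ does \emph{not} depend on $n$. That constant-probability, independence-across-vertices structure is precisely what lets the exponential factor $(1-p_k)^{n}$ beat the polynomial count $\binom{n}{k}$ in the union bound. The genuine cost of this approach is that it yields only a very weak, implicit $N(k,\ell)$ (much larger than, say, the $n\ge 6$ known for $k=3$); if one also wants a good explicit bound on $N$, the real work shifts to designing an explicit colouring (e.g.\ an algebraic one on a vertex set indexed by a group or by $\{0,\dots,k-1\}^m$) and verifying the useful-vertex property directly, which I would expect to be the more delicate part of a quantitatively sharp treatment. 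Since the conjectures ask only for the existence of $N$, the probabilistic argument already suffices.
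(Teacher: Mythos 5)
Your proposal is correct and follows essentially the same route as the paper: a union bound over random rainbow stars centred at vertices outside $S$ (with success probability $k!/k^k$ per vertex) for the upper bound, and a monochromatic $K_k$ from the multicolour Ramsey number $R_{k-1}(k)$ for the lower bound. The only difference is that you apply the Ramsey argument uniformly for all $\ell$ (which suffices, since any rainbow $S$-tree in $k-1$ colours would have to be a spanning tree of the monochromatic $K_k$), whereas the paper additionally treats the case $\ell>\lfloor k/2\rfloor$ separately via spanning-tree packing to obtain the smaller threshold $N_2=k$ there.
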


In this paper, we will apply the probabilistic method \cite{Alon} to
solve the above two conjectures.

\section{Solution to the conjectures}

It is easy to see that the second conjecture implies the first one.
So, if the second conjecture is solved, the first one follows then.
In this section, we will solve Conjecture \ref{conj2}. Firstly, let
us start with a lemma.

\begin{lemma}\label{1}
For every pair of positive integers $k,\ell$ with $k\geq3$, there
exists a positive integer
$N_{1}=4\lceil(\frac{k+\ell-1}{\ln(1-k!/k^{k})})^{2}\rceil$ such
that $rx_{k,\ell}(K_{n})\leq k$ for every integer $n \geq N_{1}$.
\end{lemma}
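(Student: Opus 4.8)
The plan is to use the probabilistic method. Color each edge of $K_n$ independently and uniformly at random with one of the $k$ colors $\{1,\dots,k\}$, and show that with positive probability the resulting coloring witnesses $rx_{k,\ell}(K_n)\le k$. The witnesses will be \emph{stars}: for a $k$-set $S=\{s_1,\dots,s_k\}$ and a vertex $v\notin S$, let $T_v^S$ be the star with center $v$ and leaf set $S$. This tree has exactly $k$ edges, so it is rainbow iff the colors of $vs_1,\dots,vs_k$ are pairwise distinct, which happens with probability $p:=k!/k^k$. For distinct external vertices $v,v'$ the trees $T_v^S$ and $T_{v'}^S$ are edge-disjoint and share only the vertices of $S$, so any collection of rainbow stars at $S$ is automatically a collection of internally disjoint rainbow $S$-trees. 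Hence it suffices to show that, with positive probability, for \emph{every} $k$-set $S$ at least $\ell$ of the $n-k$ stars $T_v^S$ $(v\notin S)$ are rainbow.

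First I would fix $S$ and analyze it in isolation. The events ``$T_v^S$ is rainbow'', for $v\in V(K_n)\setminus S$, involve pairwise disjoint sets of edges, hence are mutually independent, each of probability $p$. If fewer than $\ell$ of them occur, then at least $n-k-\ell+1$ of these stars are non-rainbow, so some $(n-k-\ell+1)$-subset of $V(K_n)\setminus S$ consists entirely of non-rainbow stars; summing over such subsets gives
\[ \Pr[\,\text{fewer than }\ell\text{ rainbow stars at }S\,]\ \le\ \binom{n-k}{\ell-1}(1-p)^{\,n-k-\ell+1}. \]
A union bound over the at most $\binom{n}{k}$ choices of $S$, together with $\binom{n}{k}\binom{n-k}{\ell-1}\le n^{\,k+\ell-1}$, then yields
\[ \Pr[\,\exists\,S\text{ with fewer than }\ell\text{ rainbow stars}\,]\ \le\ n^{\,k+\ell-1}(1-p)^{\,n-k-\ell+1}. \]

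It remains to verify that the right-hand side is strictly less than $1$ once $n\ge N_1$. Taking logarithms, this amounts to $(k+\ell-1)\ln n<(n-k-\ell+1)\ln\frac{1}{1-p}$, and using the elementary bound $\ln n\le\sqrt n$ (valid for all $n\ge 1$) it is enough to have $(k+\ell-1)\sqrt n<(n-k-\ell+1)\ln\frac{1}{1-p}$. A short computation shows that $n\ge N_1=4\lceil(\tfrac{k+\ell-1}{\ln(1-k!/k^k)})^2\rceil$ forces $\sqrt n\ge 2(k+\ell-1)/\ln\frac{1}{1-p}$, which comfortably suffices (the constant $4$ absorbs the lower-order term $-k-\ell+1$ and the slack in $\ln n\le\sqrt n$, using also that $\ln\frac{1}{1-p}<1$ since $p=k!/k^k\le 2/9$). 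Thus the failure probability is below $1$, so some $k$-coloring works and $rx_{k,\ell}(K_n)\le k$.

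The only genuine choice in the argument is how to pass from the ``honest'' requirement $n\gtrsim (k+\ell-1)\ln n/\ln\frac{1}{1-p}$ to the clean closed-form bound $N_1$; the rest — taking stars as witnesses, the mutual independence of the per-vertex rainbow events for a fixed $S$, and the two nested union bounds — is routine. One should also check that $N_1\ge k+\ell-1$, so that $n-k-\ell+1\ge 0$ and the binomial coefficients make sense, but since $\ln\frac{1}{1-p}<1$ we have $N_1>4(k+\ell-1)^2\ge k+\ell-1$, and this causes no difficulty.
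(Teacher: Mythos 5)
Your proposal is correct and follows essentially the same route as the paper: a uniformly random $k$-edge-coloring, the $n-k$ stars centered at external vertices as internally disjoint candidate $S$-trees each rainbow with probability $p=k!/k^k$, the tail bound $\binom{n-k}{\ell-1}(1-p)^{n-k-\ell+1}$, a union bound over the $\binom{n}{k}$ sets $S$, and the inequality $\ln n<\sqrt{n}$ to extract the closed-form threshold $N_1$. The only difference is cosmetic, namely how the final numerical verification is arranged.
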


\begin{proof}
Let $C=\{1,2,\cdots,k\}$ be a set of $k$ different colors. We color
the edges of $K_{n}$ with the colors from $C$ randomly and
independently. For $S\subseteq V(K_{n})$ with $|S|=k$, define
$A_{S}$ as the event that there exist at least $\ell$ internally
disjoint rainbow $S$-trees. If Pr[ $\bigcap\limits_{S}A_{S}$ ]$>0$,
then there exists a suitable $k$-edge-coloring, which implies that
$rx_{k,\ell}(K_{n})\leq k$.

Let $S\subseteq V(K_{n})$ with $|S|=k$. Without loss of generality,
we suppose $S=\{v_{1}, v_{2}, \cdots, v_{k}\}$. For any vertex $u
\in V(K_{n}) \setminus S$, let $T(u)$ denote a star with $u$ as its
center and $E(T(u))=\{uv_{1}, uv_{2},\cdots, uv_{k}\}$. Clearly,
$T(u)$ is an $S$-tree. Moreover, for $u_{1}, u_{2} \in V(K_{n})
\setminus S$ and $u_{1}\neq u_{2}$, $T(u_{1})$ and $T(u_{2})$ are
two internally disjoint $S$-trees. Let $\mathcal{T^{*}}=\{T(u)|u \in
V(K_{n}) \setminus S\}$. Then $\mathcal{T^{*}}$ is a set of $n-k$
internally disjoint $S$-trees. It is easy to see that $p$:= Pr[ $T
\in \mathcal{T^{*}}$ is a rainbow $S$-tree ]$=k!/k^{k}$ (Throughout
this paper, $\mathcal{T^{*}}$ and $p$ are always defined as this).
Denote by $B_{S}$ the event that there exist at most $\ell-1$
internally disjoint rainbow $S$-trees in $\mathcal{T^{*}}$. Here we
assume that $n\geq k+\ell\geq4$. Then $n-k > \ell-1$ and
\begin{center}
Pr[ $\overline{A_{S}}$ ]$\leq$ Pr[ $B_{S}$ ]$\leq {n-k \choose
\ell-1}(1-p)^{n-k-(\ell-1)}$$<n^{\ell-1}(1-p)^{n-k-\ell+1}$.
\end{center}
As an immediate consequence, we get that
\begin{eqnarray*}
  Pr[\ \bigcap\limits_{S} A_{S}\ ] &=& 1- Pr[\ \bigcup\limits_{S} \overline{A_{S}}\ ] \\
   &\geq& 1-\sum\limits_{S}Pr[\ \overline{A_{S}}\ ] \\
   &>& 1-\sum\limits_{S}n^{\ell-1}(1-p)^{n-k-\ell+1} \\
   &=& 1-{n \choose k}n^{\ell-1}(1-p)^{n-(k+\ell-1)} \\
   &>& 1- n^{k+\ell-1}(1-p)^{n-(k+\ell-1)}.
\end{eqnarray*}

Now we are in the position to estimate the value of $N_{1}$
according to the inequality $n^{k+\ell-1}(1-p)^{n-(k+\ell-1)}\leq1$,
which leads to Pr[ $\bigcap\limits_{S}A_{S}$ ]$>0$. This
inequality is equivalent to
\begin{center}
$(\frac{n}{1-p})^{k+\ell-1} \leq (\frac{1}{1-p})^{n}$.
\end{center}
Taking the natural logarithm, we get that
\begin{center}
$(k+\ell-1)\ln \frac{n}{1-p} \leq n\ln \frac{1}{1-p}$.
\end{center}
That is,
\begin{center}
 $\frac{k+\ell-1}{\ln (1/(1-p))}\leq \frac{n}{\ln n+\ln (1/(1-p))}$.
\end{center}

Let $f(k)=\frac{1}{1-p}=\frac{1}{1-k!/k^{k}}$. Obviously, $f(k)$ is
monotonically decreasing in $[3,+\infty)$. So, $f(k)\leq f(3)\approx
1.286$. Since $n \geq 4 > \frac{1}{1-p}$, $\ln n > \ln
\frac{1}{1-p}$, then $\frac{n}{\ln n+\ln (1/(1-p))}>\frac{n}{2\ln
n}$. Note that $\ln x < \sqrt{x}$ holds for $x\geq 4$. Thus, when
$n\geq k+\ell\geq4$, we have $\frac{n}{\ln n+\ln
(1/(1-p))}>\frac{\sqrt{n}}{2}$. Setting $\frac{k+\ell-1}{\ln
(1/(1-p))}\leq\frac{\sqrt{n}}{2}$, we get that
$n\geq4(\frac{k+\ell-1}{\ln (1/(1-p))})^{2}$. Then, the inequality
$\frac{k+\ell-1}{\ln (1/(1-p))}< \frac{n}{\ln n+\ln (1/(1-p))}$
holds for $n\geq\max\{k+\ell,4(\frac{k+\ell-1}{\ln
(1/(1-p))})^{2}\}=4(\frac{k+l-1}{\ln (1/(1-p))})^{2}$. In other
words, if $n\geq N_{1}=4\lceil(\frac{k+\ell-1}{\ln
{(1-k!/k^{k}})})^{2}\rceil$, then $Pr[\ \bigcap\limits_{S} A_{S}\
]>0$, as desired.
\end{proof}

To solve Conjecture \ref{conj2} completely, we have to determine an
integer $N_{2}$ such that for every integer $n \geq N_{2}$,
$rx_{k,\ell}(K_{n})\geq k$. First we recall the concept of Ramsey
number, which will be used in our proof. The Ramsey number $R(t,s)$
is the smallest integer $n$ such that every 2-edge-coloring of
$K_{n}$ contains either a complete subgraph on $t$ vertices, all of
whose edges are assigned color 1, or a complete subgraph on $s$
vertices, all of whose edges are assigned color 2. For positive
integers $t_{i}$ with $1\leq i\leq r$, the multicolor Ramsey number
$R(t_{1},t_{2},\cdots,t_{r})$ is defined as the smallest integer $n$
such that for every $r$-edge-coloring of $K_{n}$, there exists an
$i\in\{1,2,\cdots,r\}$ such that $K_n$ contains a complete subgraph
on $t_{i}$ vertices, all of whose edges are assigned color $i$. When
$t_{1}=t_{2}=\cdots=t_{r}=t$, $R(t_{1},t_{2},\cdots,t_{r})$ is
abbreviated to $R_{r}(t)$. The existence of such a positive integer
is guaranteed by the Ramsey's classical result \cite{Ramsey}. A
survey on the Ramsey number of graphs can be found in
\cite{Radziszowski}. A typical upper bound for the multicolor Ramsey
number is as follows, which can be found in any related textbooks,
see \cite{Bondy} for example. For all positive integers $t_{i}$ with
$1\leq i \leq r$,
\begin{center}
$R(t_{1}+1,t_{2}+1,\cdots,t_{r}+1)\leq
\frac{(t_{1}+t_{2}+\cdots+t_{r})!}{t_{1}!t_{2}!\cdots t_{r}!}$. \ \
\ \ \ \ \ \ \ \ \ \ \ \ \ \ \ \ \ \ \ (1)
\end{center}
One may find more refined upper bounds in the existing literature,
see \cite{Radziszowski} for example.

For $S\subseteq V(G)$ with $|S|=k$, let $\mathcal{T}$ be a maximum
set of internally disjoint rainbow $S$-trees in $G$. Let
$\mathcal{T}_1$ be the set of rainbow $S$-trees in $\mathcal{T}$, all of
whose edges belong to $E(G[S])$, and $\mathcal{T}_2$ be the set of
rainbow $S$-trees in $\mathcal{T}$ containing at least one edge from
$E_G[S,\overline{S}]$. Clearly, $\mathcal{T}=\mathcal{T}_1\cup
\mathcal{T}_2$ (Throughout this paper, $\mathcal{T}$,
$\mathcal{T}_1$, $\mathcal{T}_2$ are always defined as this).

\begin{lemma}\label{lem}
For $S\subseteq V(G)$ with $|S|=k$, let $T$ be a rainbow $S$-tree.
If $T\in \mathcal{T}_1$, then $T$ uses exactly $k-1$ different
colors; if $T\in \mathcal{T}_2$, then $T$ uses at least $k$
different colors.
\end{lemma}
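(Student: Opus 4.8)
The plan is to reduce the whole statement to an elementary count of vertices, using two trivial facts: a rainbow tree uses exactly as many colors as it has edges, and a tree on $m$ vertices has exactly $m-1$ edges. So everything comes down to determining $|V(T)|$.

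First I would dispose of the case $T\in\mathcal{T}_1$. By definition every edge of $T$ lies in $E(G[S])$, so both endpoints of each edge of $T$ belong to $S$. Since $k\geq 3$, the tree $T$ has at least two vertices and hence no isolated vertices, so every vertex of $T$ is incident with some edge of $T$; this forces $V(T)\subseteq S$. On the other hand $T$ is an $S$-tree, so $S\subseteq V(T)$ by definition. Therefore $V(T)=S$ and $|V(T)|=k$, which gives that $T$ has exactly $k-1$ edges; as $T$ is rainbow these edges have pairwise distinct colors, so $T$ uses exactly $k-1$ colors.

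Next I would treat $T\in\mathcal{T}_2$. By definition $T$ contains an edge $e\in E_G[S,\overline{S}]$; writing $e=xw$ with $x\in S$ and $w\in\overline{S}$, we obtain $w\in V(T)\setminus S$. Together with $S\subseteq V(T)$ this yields $|V(T)|\geq k+1$, so $T$ has at least $k$ edges, and since $T$ is rainbow it uses at least $k$ distinct colors.

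I do not expect a genuine obstacle here; the only points that need a word of justification are that a tree in $\mathcal{T}_1$ cannot contain a Steiner vertex outside $S$ (which follows from connectivity, since such a vertex would be incident with an edge not in $E(G[S])$) and that $S\subseteq V(T)$ for every $S$-tree $T$ (immediate from the definition). Both are routine.
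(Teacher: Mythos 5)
Your proposal is correct and follows essentially the same route as the paper: in both cases one pins down $|V(T)|$ ($=k$ when all edges lie in $E(G[S])$, $\geq k+1$ otherwise) and then counts edges, which for a rainbow tree equals the number of colors. You simply spell out in more detail why $V(T)=S$ in the first case, which the paper leaves as ``easy to see.''
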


\begin{proof}
It is easy to see that for each rainbow $S$-tree
$T\in\mathcal{T}_1$, $T$ has exactly $k-1$ edges. Then, exactly
$k-1$ different colors are used. For each rainbow $S$-tree
$T\in\mathcal{T}_2$, $T$ contains at least one vertex in
$V(G)\setminus S$. Then, $T$ has at least $k+1$ vertices. So the
number of edges of $T$ is at least $k$, which implies that $T$ uses
at least $k$ different colors.
\end{proof}

We proceed with the following lemma.

\begin{lemma}\label{2}
For every pair of positive integers $k,\ell$ with $k\geq3$,

(i) if $\ell > \lfloor\frac{k}{2}\rfloor$, then
$rx_{k,\ell}(K_{n})\geq k$ for every integer $n \geq  N_{2}=k$;

(ii) if $\ell \leq \lfloor\frac{k}{2}\rfloor$, then $rx_{k,\ell}(K_{n})\geq k$
for every integer $n \geq N_{2}= R_{k-1}(k)$.

\end{lemma}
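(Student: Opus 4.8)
The goal is to show that $k-1$ colors can never suffice, so that $rx_{k,\ell}(K_n)\ge k$. Suppose for contradiction that $K_n$ has an edge-coloring with only $k-1$ colors for which every $k$-set $S$ admits $\ell$ internally disjoint rainbow $S$-trees; fix such an $S$ and let $\mathcal{T}$, $\mathcal{T}_1$, $\mathcal{T}_2$ be as defined above, so $|\mathcal{T}|=\ell$. By Lemma~\ref{lem}, any tree in $\mathcal{T}_2$ needs at least $k$ distinct colors, but only $k-1$ are available, so $\mathcal{T}_2=\emptyset$ and hence $\mathcal{T}=\mathcal{T}_1$: \emph{all} $\ell$ rainbow $S$-trees live entirely inside $G[S]$, each using exactly $k-1$ edges and thus all $k-1$ colors. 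Since the trees in $\mathcal{T}_1$ are internally disjoint and contained in $G[S]$, they are pairwise edge-disjoint spanning trees of $K_k$. Now $K_k$ has $\binom{k}{2}$ edges, so it contains at most $\lfloor\binom{k}{2}/(k-1)\rfloor=\lfloor k/2\rfloor$ edge-disjoint spanning trees; therefore $\ell\le\lfloor k/2\rfloor$. This is exactly the contrapositive of part~(i): if $\ell>\lfloor k/2\rfloor$ then no such coloring exists, for every $n\ge k$.

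For part~(ii), the counting bound above is not by itself a contradiction, so I would instead produce, for a suitable $S$, an actual obstruction inside $G[S]$. The idea is to use the Ramsey number: take $n\ge R_{k-1}(k)$. Any $(k-1)$-edge-coloring of $K_n$ then contains a monochromatic $K_k$; let $S$ be the vertex set of such a monochromatic clique. Then $G[S]\cong K_k$ is monochromatic, so \emph{any} spanning tree of $G[S]$ uses only one color and hence is not rainbow (as $k-1\ge2$). Thus $\mathcal{T}_1=\emptyset$ for this $S$. Combined with $\mathcal{T}_2=\emptyset$ (which again follows from Lemma~\ref{lem}, since only $k-1$ colors are used overall), we get $\mathcal{T}=\emptyset$, contradicting the existence of even one rainbow $S$-tree, let alone $\ell$ of them. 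Hence $rx_{k,\ell}(K_n)\ge k$ for all $n\ge R_{k-1}(k)$.

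The main obstacle is part~(ii): one must realize that a purely local/counting argument on $K_k$ cannot work when $\ell\le\lfloor k/2\rfloor$ (since $\lfloor k/2\rfloor$ edge-disjoint rainbow spanning trees of $K_k$ genuinely can exist under some colorings), and that the right tool is Ramsey theory applied to the \emph{ambient} graph $K_n$ to force the existence of a $k$-subset $S$ whose induced subgraph is monochromatic. One should double-check the easy numerical facts used along the way --- that a monochromatic $K_k$ has no rainbow spanning tree precisely because $k-1\ge 2$ (which holds as $k\ge 3$), and that $K_k$ admits at most $\lfloor k/2\rfloor$ edge-disjoint spanning trees --- but these are routine. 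It is also worth noting that the two cases together are sharp in the sense that they match the threshold $\lfloor k/2\rfloor$ appearing in the statement, so no case is missed.
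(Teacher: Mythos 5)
Your proposal is correct and follows essentially the same route as the paper: Lemma~\ref{lem} rules out trees of $\mathcal{T}_2$ under $k-1$ colors, the bound of $\lfloor k/2\rfloor$ edge-disjoint spanning trees of $K_k$ handles case (i), and a Ramsey-forced monochromatic $K_k$ kills $\mathcal{T}_1$ in case (ii). The only cosmetic difference is that you derive the $\lfloor k/2\rfloor$ bound by counting edges, whereas the paper cites it from Chartrand et al.
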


\begin{proof} We distinguish two cases.

\emph{Case 1. $\ell>\lfloor\frac{k}{2}\rfloor$}

For any set $S$ of $k$ vertices in $K_{n}$, the induced subgraph by
$S$, denoted by $G[S]$, is a complete graph of order $k$. So, by
Theorem 3.1 of \cite{Chartrand} we know that $G[S]$ contains at most
$\lfloor\frac{k}{2}\rfloor$ edge-disjoint spanning trees. From
$\ell>\lfloor\frac{k}{2}\rfloor$, we can derive that there must
exist one rainbow $S$-tree in $\mathcal{T}_2$, which uses at least
$k$ different colors by Lemma \ref{lem}. Thus
$rx_{k,\ell}(K_{n})\geq k$ for every integer $n\geq k$.

\emph{Case 2. $\ell\leq\lfloor\frac{k}{2}\rfloor$}

From the Ramsey's theorem, we know that if $k\geq 3$ and $n\geq
R_{k-1}(k)$, then in any $(k-1)$-edge-coloring of $K_{n}$, one will
find a monochromatic subgraph $K_{k}$. Now, take $S$ as the set of
$k$ vertices of the monochromatic subgraph $K_{k}$. Then,
$\mathcal{T}_1=\emptyset$. In other words, all the rainbow $S$-trees
belong to $\mathcal{T}_2$. Similar to \emph{Case 1}, we get that
$rx_{k,\ell}(K_{n})\geq k$ for every integer $n \geq
R_{k-1}(k)$.
\end{proof}

Combining Lemmas \ref{1} and \ref{2}, we come to the following
conclusion, which solves Conjecture \ref{conj2}.

\begin{theorem}\label{thm}
For every pair of positive integers $k,\ell$ with $k\geq3$,

(i) if $\ell > \lfloor\frac{k}{2}\rfloor$, then there exists a positive
integer $N=4\lceil(\frac{k+\ell-1}{\ln(1-k!/k^{k})})^{2}\rceil$ such
that $rx_{k,\ell}(K_{n})=k$ for every integer $n \geq N$.

(ii) if $\ell \leq \lfloor\frac{k}{2}\rfloor$, there exists a positive
integer $N=\max\{4\lceil(\frac{k+\ell-1}{\ln(1-k!/k^{k})})^{2}\rceil,R_{k-1}(k)\}$
such that $rx_{k,\ell}(K_{n})=k$ for every integer $n \geq N$.
\end{theorem}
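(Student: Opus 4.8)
The plan is to combine the two lemmas that have just been proved, since together they pin down $rx_{k,\ell}(K_n)$ from both sides. Lemma \ref{1} gives the upper bound $rx_{k,\ell}(K_n)\le k$ for all $n\ge N_1=4\lceil(\frac{k+\ell-1}{\ln(1-k!/k^k)})^2\rceil$, and Lemma \ref{2} gives the matching lower bound $rx_{k,\ell}(K_n)\ge k$, but with a threshold $N_2$ that depends on whether $\ell$ exceeds $\lfloor k/2\rfloor$. So I would split the argument into exactly the same two cases as Lemma \ref{2}.

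\medskip

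\noindent\textbf{Proof.} We prove the two parts separately.

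\emph{(i)} Suppose $\ell>\lfloor k/2\rfloor$. Set $N=N_1=4\lceil(\frac{k+\ell-1}{\ln(1-k!/k^k)})^2\rceil$. Since $k\ge 3$ and $\ell\ge 1$ we have $k+\ell\ge 4$, so the hypothesis $n\ge k+\ell$ needed in Lemma \ref{1} is automatically satisfied whenever $n\ge N_1$ (one checks $N_1\ge k+\ell$ directly from the estimate $\frac{n}{\ln n+\ln(1/(1-p))}>\frac{\sqrt n}{2}$ used in that lemma). Hence by Lemma \ref{1}, $rx_{k,\ell}(K_n)\le k$ for every $n\ge N$. On the other hand, by Lemma \ref{2}(i) we have $rx_{k,\ell}(K_n)\ge k$ already for every $n\ge k$, hence in particular for every $n\ge N$. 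Combining the two inequalities gives $rx_{k,\ell}(K_n)=k$ for every $n\ge N$.

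\emph{(ii)} Suppose $\ell\le\lfloor k/2\rfloor$. Set $N=\max\{N_1,R_{k-1}(k)\}$ with $N_1$ as above. For every $n\ge N$ we have $n\ge N_1$, so Lemma \ref{1} yields $rx_{k,\ell}(K_n)\le k$; and $n\ge R_{k-1}(k)$, so Lemma \ref{2}(ii) yields $rx_{k,\ell}(K_n)\ge k$. Therefore $rx_{k,\ell}(K_n)=k$ for every $n\ge N$. $\hfill\square$

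\medskip

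There is essentially no obstacle here: the theorem is a bookkeeping combination of the two lemmas, and the only point requiring a line of care is verifying that the constraint $n\ge k+\ell$ imposed inside Lemma \ref{1} does not enlarge the threshold beyond $N_1$ in part (i) — this follows because the chain of inequalities in the proof of Lemma \ref{1} was arranged precisely so that $N_1=4(\frac{k+\ell-1}{\ln(1/(1-p))})^2\ge k+\ell$. Everything else is immediate, so the write-up above is already a complete proof rather than a sketch.
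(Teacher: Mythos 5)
Your proof is correct and is exactly the paper's argument: the theorem is stated there as an immediate combination of Lemmas \ref{1} and \ref{2}, with the upper bound from Lemma \ref{1} and the case-split lower bound from Lemma \ref{2}. Your extra remark verifying $N_1\ge k+\ell$ is a reasonable (and valid) bit of care that the paper leaves implicit.
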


Note that although this gives a lower bound $N$ for the order $n$ of
a complete graph with $rx_{k,\ell}(K_{n})=k$, the bound is far from
the best. Also, note that from Inequ.(1) we can get a rough upper
bound for the Ramsey number $R_{k-1}(k)\leq
\frac{((k-1)^{2})!}{((k-1)!)^{k-1}}$. Next section we will use this
bound to investigate a more exact solution of $N$ for the
($3,\ell$)-rainbow index $rx_{3,\ell}(K_{n})$.

\section{Exact asymptotic solution of $N$ for $k=3$}

In this section, we will focus on the exact asymptotic solution of
$N$ for the $(3,\ell)$-rainbow index of $K_{n}$. To start with, we
present a result derived from Theorem \ref{thm}.
\begin{lemma}\label{lem2}
For every positive integers $\ell$, there exists an integer
$N=4\lceil(\frac{\ell+2}{\ln9/7})^{2}\rceil$ such that $rx_{3,\ell}(K_{n})=3$
for every integer $n\geq N$.
\end{lemma}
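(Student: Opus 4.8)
The plan is to specialize Theorem~\ref{thm} to the case $k=3$. Since $k=3$, we have $\lfloor k/2\rfloor = 1$, so the dichotomy in Theorem~\ref{thm} splits according to whether $\ell > 1$ or $\ell \leq 1$, i.e.\ whether $\ell \geq 2$ or $\ell = 1$. In either branch the probabilistic bound $N_1 = 4\lceil(\frac{k+\ell-1}{\ln(1-k!/k^k)})^2\rceil$ appears, and substituting $k=3$ gives $k!/k^k = 6/27 = 2/9$, hence $1 - k!/k^k = 7/9$ and $\ln(1-k!/k^k) = \ln(7/9) = -\ln(9/7)$. Also $k+\ell-1 = \ell+2$. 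Therefore $N_1 = 4\lceil(\frac{\ell+2}{-\ln(9/7)})^2\rceil = 4\lceil(\frac{\ell+2}{\ln(9/7)})^2\rceil$, matching the claimed $N$ exactly.

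First I would dispose of the case $\ell \geq 2$: here $\ell > \lfloor 3/2\rfloor = 1$, so part~(i) of Theorem~\ref{thm} directly yields $rx_{3,\ell}(K_n) = 3$ for all $n \geq N = 4\lceil(\frac{\ell+2}{\ln(9/7)})^2\rceil$, which is the desired conclusion. Then I would handle $\ell = 1$ separately: part~(ii) of Theorem~\ref{thm} gives the bound $N = \max\{4\lceil(\frac{\ell+2}{\ln(9/7)})^2\rceil, R_{k-1}(k)\} = \max\{4\lceil(\frac{3}{\ln(9/7)})^2\rceil, R_2(3)\}$. Since $R_2(3) = R(3,3) = 6$ and $4\lceil(3/\ln(9/7))^2\rceil$ is clearly larger than $6$ (as $3/\ln(9/7) > 3/0.26 > 11$, so the quantity exceeds $4 \cdot 121$), the maximum is just $4\lceil(\frac{3}{\ln(9/7)})^2\rceil$, which again equals $4\lceil(\frac{\ell+2}{\ln(9/7)})^2\rceil$ with $\ell=1$. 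Hence the unified formula holds in both cases.

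The only mild subtlety — hardly an obstacle — is the bookkeeping with the sign of the logarithm: $\ln(1-k!/k^k)$ is negative, so one must be careful that $(\frac{k+\ell-1}{\ln(1-k!/k^k)})^2 = (\frac{\ell+2}{\ln(9/7)})^2$ with the positive quantity $\ln(9/7)$ in the denominator, so that the ceiling and the factor $4$ act on a genuinely positive number. Beyond that, the proof is a direct substitution into the already-established Theorem~\ref{thm}, together with the elementary observation that for $k=3$ the Ramsey term $R_{k-1}(k) = R(3,3) = 6$ is absorbed by the probabilistic term. I would write it up as a short paragraph combining these two case checks.
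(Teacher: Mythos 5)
Your proposal is correct and follows essentially the same route as the paper: the paper's own proof combines Lemma~\ref{1} (upper bound) and Lemma~\ref{2} (lower bound, $n\geq 6$) directly rather than quoting Theorem~\ref{thm}, and observes that $4\lceil(\frac{\ell+2}{\ln 9/7})^{2}\rceil>6$ for all $\ell\geq 1$, which is exactly your observation that the Ramsey term $R_{2}(3)=6$ is absorbed by the probabilistic term. The substitution $k!/k^{k}=2/9$, the sign bookkeeping for $\ln(7/9)=-\ln(9/7)$, and the case analysis are all handled correctly.
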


\begin{proof}
From Lemma \ref{1}, we know that $rx_{3,\ell}(K_{n})\leq 3$ for
every integer $n\geq 4\lceil(\frac{\ell+2}{\ln9/7})^{2}\rceil$. On
the other hand, it follows from Lemma \ref{2} that
$rx_{3,\ell}(K_{n})\geq3$ for every integer $n\geq 6$. Since
$4\lceil(\frac{\ell+2}{\ln9/7})^{2}\rceil>6 $ holds for all integers
$\ell\geq1$, $rx_{3,\ell}(K_{n})=3$ for every integer $n\geq
4\lceil(\frac{\ell+2}{\ln9/7})^{2}\rceil$.
\end{proof}

One can see that the value of $N$ in Lemma \ref{lem2} is
$O(\ell^{2})$, which is far from the best. Next step, we will
improve $N$ to $\frac{9}{2}\ell+o(\ell)$ in a certain range for
$\ell$, and show that it is  asymptotically the best possible. To
see this, we start with a general lemma for all integers $k\geq3$.

\begin{lemma}\label{3}
Let $\varepsilon$ be a constant with $0< \varepsilon <1$, $k,\ell$
be two integers with $k\geq 3$ and $\ell\geq
\frac{k!}{k^{k}}(\theta-k)(1-\varepsilon)+1$, where
$\theta=\theta(\varepsilon,k)$ is the largest solution of
$x^{k}e^{-\frac{k!}{2k^{k}}\varepsilon^{2}(x-k)}=1$. Then,
$rx_{k,\ell}(K_{n})\leq k$ for every integer $n\geq
\lceil\frac{k^{k}(\ell-1)}{k!(1-\varepsilon)}+k\rceil$.
\end{lemma}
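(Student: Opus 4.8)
The plan is to rerun the probabilistic argument of Lemma~\ref{1} but to replace the crude union bound on $\Pr[B_S]$ by a Chernoff-type concentration estimate; this is what turns the quadratic-in-$\ell$ threshold of Lemma~\ref{1} into the linear-in-$\ell$ threshold claimed here. As before, I would color the edges of $K_n$ with the $k$ colors uniformly and independently at random, fix a set $S=\{v_1,\dots,v_k\}$, and consider the family $\mathcal{T}^{*}=\{T(u):u\in V(K_n)\setminus S\}$ of $n-k$ internally disjoint stars centered at the vertices outside $S$. Let $X$ be the number of members of $\mathcal{T}^{*}$ that are rainbow. Then $X$ is binomial with parameters $n-k$ and $p=k!/k^{k}$, and exactly as in Lemma~\ref{1} one has $\Pr[\overline{A_S}]\le\Pr[B_S]=\Pr[X\le\ell-1]$.

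First I would apply the multiplicative Chernoff bound for the lower tail (see \cite{Alon}): writing $\mu=(n-k)p$ for the mean of $X$, one has $\Pr[X\le(1-\varepsilon)\mu]\le e^{-\mu\varepsilon^{2}/2}$. The hypothesis $n\ge\lceil\frac{k^{k}(\ell-1)}{k!(1-\varepsilon)}+k\rceil$ is precisely what guarantees $(1-\varepsilon)\mu=(1-\varepsilon)(n-k)p\ge\ell-1$, so that $\Pr[X\le\ell-1]\le\Pr[X\le(1-\varepsilon)\mu]\le e^{-\mu\varepsilon^{2}/2}=e^{-\frac{k!}{2k^{k}}\varepsilon^{2}(n-k)}$. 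A union bound over the $\binom{n}{k}<n^{k}$ choices of $S$ then yields
\[
\Pr\Big[\bigcap_{S}A_S\Big]\ \ge\ 1-\binom{n}{k}e^{-\frac{k!}{2k^{k}}\varepsilon^{2}(n-k)}\ >\ 1-n^{k}e^{-\frac{k!}{2k^{k}}\varepsilon^{2}(n-k)},
\]
so it suffices to check that $g(n)\le1$, where $g(x)=x^{k}e^{-\frac{k!}{2k^{k}}\varepsilon^{2}(x-k)}$.

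This is where the quantity $\theta$ enters. I would first note that $g(k)=k^{k}>1$ while $g(x)\to0$ as $x\to+\infty$, so the largest real solution $\theta$ of $g(x)=1$ exists and is strictly greater than $k$, and moreover $g(x)\le1$ for every $x\ge\theta$ --- otherwise the intermediate value theorem on $(x,+\infty)$ would produce a root of $g$ larger than $\theta$. Finally, the hypothesis $\ell\ge\frac{k!}{k^{k}}(\theta-k)(1-\varepsilon)+1$ rearranges to $\theta\le\frac{k^{k}(\ell-1)}{k!(1-\varepsilon)}+k$, so any $n\ge\lceil\frac{k^{k}(\ell-1)}{k!(1-\varepsilon)}+k\rceil$ satisfies $n\ge\theta$, hence $g(n)\le1$. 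Combining the two displays gives $\Pr[\bigcap_S A_S]>0$, which exhibits a $k$-edge-coloring of $K_n$ for which every $k$-set admits $\ell$ internally disjoint rainbow trees; that is, $rx_{k,\ell}(K_n)\le k$.

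The only genuinely delicate points I anticipate are (a) the intermediate-value argument showing $g(x)\le1$ on the whole ray $[\theta,+\infty)$ and not merely at $x=\theta$, and (b) the elementary but fiddly bookkeeping that converts the hypothesis on $\ell$ into the inequality $\theta\le\frac{k^{k}(\ell-1)}{k!(1-\varepsilon)}+k$ and matches it against the hypothesis on $n$. Everything else is a direct substitution into the Chernoff inequality together with the same union bound already used in Lemma~\ref{1}; one should also record the harmless side condition $n-k>\ell-1$ needed to make $B_S$ meaningful, which is immediate since $\frac{k^{k}}{k!(1-\varepsilon)}>1$.
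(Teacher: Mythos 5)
Your proposal is correct and follows essentially the same route as the paper's own proof: a multiplicative Chernoff lower-tail bound on the binomial count $X$ of rainbow stars in $\mathcal{T}^{*}$, a union bound over the $\binom{n}{k}<n^{k}$ choices of $S$, and the observation that the hypothesis on $\ell$ forces $n\geq\theta$ so that $n^{k}e^{-\frac{1}{2}\varepsilon^{2}p(n-k)}\leq 1$. The only (cosmetic) difference is that you compare the events $\{X\leq\ell-1\}\subseteq\{X\leq(1-\varepsilon)\mu\}$ directly, whereas the paper sets the deviation parameter to $\frac{(n-k)p-\ell+1}{(n-k)p}$ and then bounds it below by $\varepsilon$; your explicit intermediate-value justification that $g(x)\leq1$ on all of $[\theta,+\infty)$ is a welcome tightening of the paper's ``eventually decreases'' remark.
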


\begin{proof}
Here we follow the notations $C,S,A_{S},T(u),p,\mathcal{T^{*}}$ in
the proof of Lemma \ref{1}. Color the edges of $K_{n}$ with the
colors from $C$ randomly and independently. Just like in Lemma
\ref{1}, our aim is to obtain Pr[ $\bigcap\limits_{S}A_{S}$ ]$>0$.
We assume $n>k$.

Let $X$ be the number of rainbow $S$-trees in $\mathcal{T^{*}}$.
Clearly, $X\sim Bi(n-k,p)$ and $EX=(n-k)p$. Using the Chernoff Bound
\cite{Alon}, we get that $$Pr[\ \overline{A_{S}}\ ]\leq Pr[\ X\leq
\ell-1\ ]=Pr[\ X\leq (n-k)p(1- \frac{(n-k)p-\ell+1}{(n-k)p})\ ]\leq
e^{-\frac{1}{2}[\frac{(n-k)p-\ell+1}{(n-k)p}]^{2}p(n-k)}.$$
Note
that the condition $n\geq \frac{ \ell-1}{p(1-\varepsilon)}+k$
ensures $(n-k)p>\ell-1$. So we can apply the Chernoff Bound to
scaling the above inequalities. Also since $n\geq \frac{
\ell-1}{p(1-\varepsilon)}+k$, then
$\frac{(n-k)p-\ell+1}{(n-k)p}\geq\varepsilon$, and thus $Pr[\
\overline{A_{S}}\ ]\leq e^{-\frac{1}{2}\varepsilon^{2}p(n-k)}$. So,
\begin{eqnarray*}
  Pr[\ \bigcap\limits_{S} A_{S}\ ] &=& 1- Pr[\ \bigcup\limits_{S} \overline{A_{S}}\ ] \\
  &\geq& 1-\sum\limits_{S}Pr[\ \overline{A_{S}}\ ]\\
  &\geq& 1-\sum\limits_{S}e^{-\frac{1}{2}\varepsilon^{2}p(n-k)}\\
   &=& 1-{n \choose k}e^{-\frac{1}{2}\varepsilon^{2}p(n-k)} \\
   &>& 1-n^{k}e^{-\frac{1}{2}\varepsilon^{2}p(n-k)}.\\
\end{eqnarray*}
Obviously, the function
$f(x)=x^{k}e^{-\frac{1}{2}\varepsilon^{2}p(x-k)}$ eventually
decreases and tends to $0$ as $x\rightarrow +\infty$. Let
$\theta=\theta(\varepsilon,k)$ be the largest solution of
$x^{k}e^{-\frac{1}{2}\varepsilon^{2}p(x-k)}=1$. Then, if
$n\geq\theta$, then
$n^{k}e^{-\frac{1}{2}\varepsilon^{2}p(n-k)}\leq1$, and consequently,
$Pr[\ \bigcap\limits_{S} A_{S}\ ]>0$, as desired. On the other hand,
since $\ell\geq p(\theta-k)(1-\varepsilon)+1$, then $n\geq \frac{
(\ell-1)}{p(1-\varepsilon)}+k\geq \theta$, which completes our
proof.
\end{proof}

Let $k=3$. From Lemma \ref{3} we know that if $0< \varepsilon <1$,
$\ell$ is an integer with $\ell\geq
\frac{2}{9}(\theta-3)(1-\varepsilon)+1$ where
$\theta=\theta(\varepsilon)$ is the largest solution of
$x^{3}e^{-\frac{1}{9}\varepsilon^{2}(x-3)}=1$, then
$rx_{3,\ell}(K_{n})\leq 3$ for every integer $n\geq
\lceil\frac{9(\ell-1)}{2(1-\varepsilon)}+3\rceil$. On the other
hand, it follows from Lemma \ref{2} that $rx_{3,\ell}(K_{n})\geq 3$
for all integers $n\geq6$. Thus we get the following theorem.

\begin{theorem}
Let $\varepsilon$ be a constant with $0< \varepsilon <1$, $\ell$ be
an integer with $\ell\geq \frac{2}{9}(\theta-3)(1-\varepsilon)+1$
where $\theta=\theta(\varepsilon)$ is the largest solution of
$x^{3}e^{-\frac{1}{9}\varepsilon^{2}(x-3)}=1$. Then, there exists an
integer
$N=\max\{6,\lceil\frac{9(\ell-1)}{2(1-\varepsilon)}+3\rceil\}$ such
that $rx_{3,\ell}(K_{n})= 3$ for every integer $n\geq N$.
\end{theorem}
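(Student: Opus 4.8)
The plan is to derive the theorem as an immediate specialization of Lemma~\ref{3}, which supplies the upper bound $rx_{3,\ell}(K_n)\le 3$, together with Lemma~\ref{2}, which supplies the matching lower bound $rx_{3,\ell}(K_n)\ge 3$; the only real task is to substitute $k=3$ and check that the claimed $N$ meets the hypotheses of both lemmas at once.

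First I would instantiate Lemma~\ref{3} with $k=3$. Then $p=k!/k^{k}=6/27=2/9$, so $\frac{1}{2}\varepsilon^{2}p=\frac{1}{9}\varepsilon^{2}$, and the defining equation $x^{k}e^{-\frac{1}{2}\varepsilon^{2}p(x-k)}=1$ becomes exactly $x^{3}e^{-\frac{1}{9}\varepsilon^{2}(x-3)}=1$; as already observed in the proof of Lemma~\ref{3}, the left-hand side eventually decreases to $0$, so this equation has a largest root $\theta=\theta(\varepsilon)$. The hypothesis $\ell\ge\frac{k!}{k^{k}}(\theta-k)(1-\varepsilon)+1$ then reads $\ell\ge\frac{2}{9}(\theta-3)(1-\varepsilon)+1$, which is precisely the assumption of the theorem, and the threshold $\lceil\frac{k^{k}(\ell-1)}{k!(1-\varepsilon)}+k\rceil$ becomes $\lceil\frac{9(\ell-1)}{2(1-\varepsilon)}+3\rceil$. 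Hence $rx_{3,\ell}(K_n)\le 3$ for every integer $n\ge\lceil\frac{9(\ell-1)}{2(1-\varepsilon)}+3\rceil$.

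Next I would invoke Lemma~\ref{2} with $k=3$ to get the lower bound. Since $\lfloor k/2\rfloor=\lfloor 3/2\rfloor=1$: if $\ell\ge 2$ then part~(i) applies and gives $rx_{3,\ell}(K_n)\ge 3$ for all $n\ge 3$, while if $\ell=1$ then part~(ii) applies and gives $rx_{3,\ell}(K_n)\ge 3$ for all $n\ge R_{2}(3)=R(3,3)=6$. In either case $rx_{3,\ell}(K_n)\ge 3$ holds for every integer $n\ge 6$.

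Finally I would combine the two halves. Choosing $N=\max\{6,\lceil\frac{9(\ell-1)}{2(1-\varepsilon)}+3\rceil\}$ ensures simultaneously that $n\ge 6$ (so the lower bound from Lemma~\ref{2} applies) and that $n\ge\lceil\frac{9(\ell-1)}{2(1-\varepsilon)}+3\rceil$ (so the upper bound from Lemma~\ref{3} applies), whence $3\le rx_{3,\ell}(K_n)\le 3$, i.e.\ $rx_{3,\ell}(K_n)=3$, for every integer $n\ge N$. There is no genuine obstacle here: all the substance lies in the Chernoff-bound estimate inside Lemma~\ref{3} and the Ramsey argument inside Lemma~\ref{2}. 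The two points worth recording are the $\ell=1$ versus $\ell\ge 2$ case split needed to read off the lower bound from Lemma~\ref{2}, and the role of the $\max$ with $6$, which reconciles the (possibly smaller) upper-bound threshold with the lower-bound threshold $6$ when $\ell$ is small.
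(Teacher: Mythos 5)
Your proposal is correct and follows exactly the route the paper takes: specialize Lemma~\ref{3} to $k=3$ (with $p=2/9$, so the threshold becomes $\lceil\frac{9(\ell-1)}{2(1-\varepsilon)}+3\rceil$) for the upper bound, and use Lemma~\ref{2} to get $rx_{3,\ell}(K_n)\ge 3$ for $n\ge 6$. Your explicit $\ell=1$ versus $\ell\ge 2$ case split when reading off Lemma~\ref{2} is a small extra care the paper glosses over, but the argument is the same.
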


For example, if we set $\varepsilon=\frac{1}{2}$, then
$\theta\approx712.415$. The result shows that for $\ell\geq80$,
$rx_{3,\ell}(K_{n})= 3$ holds for every integer $n\geq 9\ell-6$. If
we set $\varepsilon=\frac{2}{3}$, then $\theta\approx360.699$. The
result shows that for $\ell\geq28$, $rx_{3,\ell}(K_{n})= 3$ holds
for every integer $n\geq \frac{3}{2}(9\ell-7)$.

Now we have improved $N$ from $O(\ell^{2})$ to
$\frac{9}{2}\ell+o(\ell)$. A natural question is how small the
integer $N$ can be. The next lemma will show that
$\frac{9}{2}\ell+o(\ell)$ is asymptotically the best possible.

\begin{lemma} \label{lem3} For any $3$-edge-coloring of $K_{n}$,
there exists a set $S\subseteq V(K_{n})$ with $|S|=3$ such that the
number of internally disjoint rainbow $S$-trees is at most
$\frac{2(n-1)^{2}}{9(n-2)}+3$.
\end{lemma}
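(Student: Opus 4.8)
The plan is to exhibit, for every $3$-edge-coloring of $K_n$, a triple $S$ whose rainbow $S$-trees are few in number, by counting the two types of $S$-trees separately as in Lemma~\ref{lem}. Recall that any rainbow $S$-tree for $|S|=3$ is either a triangle on $S$ lying entirely in $G[S]$ (type $\mathcal{T}_1$, using $2$ colors) or a tree containing a vertex outside $S$; since at most $\lfloor 3/2\rfloor=1$ edge-disjoint spanning tree of $G[S]$ exists, at most one tree of $\mathcal{T}_1$ occurs, and it occurs only if the triangle $v_1v_2v_3$ is rainbow. A tree in $\mathcal{T}_2$ must use at least one outside vertex $u$ and contains either a star $T(u)$ (all three edges $uv_1,uv_2,uv_3$) or a path through $u$ together with one edge inside $S$; internal disjointness forces distinct trees in $\mathcal{T}_2$ to use distinct outside vertices, so the size of any maximum such family is bounded by the number of outside vertices $u$ for which $u$ can serve in a rainbow $S$-tree. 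Thus the total count of internally disjoint rainbow $S$-trees is at most $1 + |\{u \in V\setminus S : u \text{ is in some rainbow } S\text{-tree}\}|$, and the goal is to choose $S$ so that few outside vertices qualify.

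The key idea is an averaging (first-moment) argument over the choice of $S$. For a fixed ordered outside vertex $u$ and a triple $\{v_1,v_2,v_3\}$, the colors on the three edges $uv_1,uv_2,uv_3$ together with the colors on the three edges of the triangle determine whether $u$ can appear in a rainbow $S$-tree: with only $3$ colors available, a rainbow star $T(u)$ needs $uv_1,uv_2,uv_3$ to receive all three distinct colors, and a rainbow $\mathcal{T}_2$-path needs two of these edges distinctly colored plus the connecting triangle edge colored differently from both. So the plan is: first count, over all $S$ and all outside $u$, the number of pairs $(S,u)$ such that $u$ lies in some rainbow $S$-tree; second, divide by $\binom n3$ to get the average, over $S$, of the number of usable outside vertices; third, conclude that some $S$ achieves at most this average. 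The combinatorial heart is bounding, for each outside vertex $u$, the number of triples $S$ for which $u$ is usable. Partition $V\setminus\{u\}$ into the three color classes $N_1,N_2,N_3$ of the edges at $u$ (of sizes $a+b+c=n-1$); then a triple $\{v_1,v_2,v_3\}$ makes $u$ usable via a star iff it is a transversal (one vertex from each class), and via a path iff at least two of $v_1,v_2,v_3$ lie in distinct classes with a suitable triangle edge — essentially any triple meeting at least two classes can be usable depending on the triangle coloring. Counting transversals gives $abc$ and this is maximized, subject to $a+b+c=n-1$, at $a=b=c=(n-1)/3$, giving roughly $((n-1)/3)^3$; the contribution is then $\sum_u abc / \binom n3$.

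Here is where the estimate $\frac{2(n-1)^2}{9(n-2)}$ should emerge. Summing $\binom{n-1}{\text{?}}$-type counts and dividing by $\binom n3 = \frac{n(n-1)(n-2)}{6}$, the dominant term $n\cdot\frac{(n-1)^3}{27}$ divided by $\frac{n(n-1)(n-2)}{6}$ gives $\frac{6(n-1)^2}{27(n-2)}=\frac{2(n-1)^2}{9(n-2)}$, which matches the claimed bound; the leftover $+3$ (rather than $+1$) presumably absorbs the single possible $\mathcal{T}_1$-tree plus lower-order correction terms and the rounding needed to turn an average into an integer upper bound. So the skeleton is: (1) reduce counting rainbow $S$-trees to counting usable outside vertices plus the at-most-one inside triangle, via Lemma~\ref{lem} and internal disjointness; (2) for each outside $u$, bound the number of triples $S$ making $u$ usable by something like $abc + (\text{lower order})$ with $a+b+c=n-1$; (3) apply AM–GM so $abc \le ((n-1)/3)^3$; (4) average over the $\binom n3$ triples and pick the best $S$.

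The main obstacle I anticipate is step (2): carefully characterizing when an outside vertex $u$ can be part of a rainbow $S$-tree in a $3$-coloring and getting a clean upper bound on the number of such triples $S$. One must be careful that a single outside vertex $u$ could in principle support \emph{several} internally disjoint rainbow $S$-trees—no, internal disjointness forbids this since two trees sharing $u$ would share the vertex $u\notin S$—so each usable $u$ contributes at most $1$ to the count, which is the crucial simplification. The other delicate point is verifying that a triple meeting only two of $u$'s color classes is ``usable'' at most under restrictive triangle-coloring conditions, so that these do not inflate the count beyond the $abc$ main term plus acceptable lower-order terms; tracking exactly which triangle colorings permit a rainbow path and summing correctly is the fiddly part, but it is elementary case analysis over the $3^3$ triangle colorings combined with the $3$ choices of where each $v_i$ lies. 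Once that bookkeeping is done, the AM–GM optimization and the averaging are routine and yield the stated bound $\frac{2(n-1)^2}{9(n-2)}+3$.
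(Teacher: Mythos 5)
There is a genuine gap, and it sits exactly where you flagged the ``fiddly part'': the path-usable external vertices are \emph{not} a lower-order correction, so your averaging as set up cannot recover the constant $\tfrac{2}{9}$. In your scheme you bound the family by $1+\#\{u\notin S: u$ lies in some rainbow $S$-tree$\}$ and then average the right-hand side over $S$. For a fixed $u$, the triples $S$ for which $u$ is usable via a star contribute $d_1(u)d_2(u)d_3(u)\le\left(\frac{n-1}{3}\right)^{3}$, which after dividing by $\binom{n}{3}$ gives the main term $\frac{2(n-1)^2}{9(n-2)}$. But the triples for which $u$ is usable only via a path $v_iuv_j$ plus a triangle edge number $\Theta(n^3)$ in general (choose $v_i,v_j$ in two different color classes at $u$ --- about $(n-1)^2/3$ pairs at the balanced point --- and then a positive fraction of the $n-3$ choices of the third vertex will supply a triangle edge of the third color). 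That adds $\Theta(n)$ to the average, the same order as the main term, so your bound degrades to $\frac{2(n-1)^2}{9(n-2)}+Cn$ for some $C>0$ rather than $+3$. No amount of case analysis over the $3^3$ triangle colorings will make this contribution vanish, because it is genuinely present in, e.g., a typical coloring.

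The missing idea is a different split of the tree family. Since only $3$ colors are used, a rainbow $S$-tree has at most $3$ edges, hence at most one vertex outside $S$; and if it uses no edge of $G[S]$ it must be the full star $T(u)$ at its unique external vertex. So classify the trees by whether they contain at least one of the three edges of the triangle $G[S]$: edge-disjointness caps that subfamily at $3$ (this absorbs all the path-type trees at once, without any averaging), and every remaining tree is a rainbow star $T(u)$. Now the averaging only has to count rainbow stars, $\sum_S X_2(S)=\sum_v d_1(v)d_2(v)d_3(v)$, and AM--GM plus division by $\binom{n}{3}$ gives exactly $\frac{2(n-1)^2}{9(n-2)}+3$. (A small additional slip: a tree of $\mathcal{T}_1$ on three vertices is a $2$-edge path, not a rainbow triangle, though your bound of at most one such tree is correct; it is subsumed by the ``at most $3$'' above in any case.)
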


\begin{proof}
Let $C$ be an arbitrary $3$-edge-coloring of $K_{n}$. For every set
$S\subseteq V(K_{n})$ with $|S|=3$, we define the following three
variables:

$\bullet$ $X(S)$ is the number of internally disjoint rainbow
$S$-trees;

$\bullet$ $X_{1}(S)$ is the number of internally disjoint rainbow
$S$-trees that contains at least one edge in $E(G[S])$;

$\bullet$ $X_{2}(S)$ is the number of internally disjoint rainbow
$S$-trees in $\mathcal{T^{*}}=\{T(u)|u \in V(K_{n}) \setminus S\}.$

In fact, $X(S)=X_{1}(S)+X_{2}(S)$. Moreover, $X_{1}(S)\leq 3$ since
there are exactly three edges in $E(G[S])$.

For any vertex $v\in V(K_{n})$, we define $Y_{v}$ as the number of
distinct rainbow stars with $3$ edges and with $v$ as its center.
Denote by $d_{i}(v)$ $(1\leq i\leq 3)$ the number of edges of color
$i$ incident with $v$. Apparently,
$d_{1}(v)+d_{2}(v)+d_{3}(v)=d(v)=n-1$. Counting the distinct
rainbow stars in two ways, we have
$\sum\limits_{S}{X_{2}(S)}=\sum\limits_{v}{Y_{v}}$. Then
\begin{eqnarray*}
  EX &=& \frac{1}{ {n\choose 3}} \sum\limits_{S}{X(S)}\\
  &=& \frac{1}{{n \choose 3}} (\sum\limits_{S}{X_{1}(S)}+\sum\limits_{S}{X_{2}(S)})\\
  &\leq& \frac{1}{{n \choose 3}} (\sum\limits_{S}{3}+\sum\limits_{v}{Y_{v}})\\
  &=& 3+ \frac{1}{{n \choose
  3}}\sum\limits_{v}{d_{1}(v)d_{2}(v)d_{3}(v)}\\
  &\leq& 3+ \frac{1}{{n \choose 3}}\sum\limits_{v}{(\frac{d_{1}(v)+d_{2}(v)+d_{3}(v)}{3})^{3}}    \\
     &=&3+\frac{1}{{n \choose 3}}\sum\limits_{v}{(\frac{n-1}{3})^{3}} \\
     &=&3+\frac{n}{{n \choose 3}}(\frac{n-1}{3})^{3} \\
     &=&3+\frac{2(n-1)^{2}}{9(n-2)}.
\end{eqnarray*}

Therefore, there exists a set $S$ of three vertices such that the
number of internally disjoint rainbow $S$-trees is at most
$\frac{2(n-1)^{2}}{9(n-2)}+3$.
\end{proof}

It follows from the above lemma that
$\ell\leq\frac{2(n-1)^{2}}{9(n-2)}+3$, which is approximately
equivalent to $n\geq\frac{9}{2}\ell+o(\ell)$. Therefore,
$\frac{9}{2}\ell+o(\ell)$ is asymptotically the best possible for
the lower bound on $N$.

\section{Concluding remark}

In this paper, we solve the two conjectures in \cite{Chartrand}. At
first we prove that for every pair of positive integers $k,\ell$
with $k\geq3$, if $n \geq
4\lceil(\frac{k+\ell-1}{\ln(1-k!/k^{k})})^{2}\rceil$, then
$rx_{k,\ell}(K_{n})\leq k$. Recall that the Ramsey number
$R_{k-1}(k)$ is the smallest number $n$ such that any
$(k-1)$-edge-coloring of $K_{n}$ yields a monochromatic subgraph
$K_{k}$. So, if $n\geq R_{k-1}(k)$, then $rx_{k,\ell}(K_{n})\geq k$
(Note that $R_{k-1}(k)\leq \frac{((k-1)^{2})!}{((k-1)!)^{k-1}}$).
Thus, we get that $rx_{k,\ell}(K_{n})=k$ for every integer $n\geq
N=\max\{4\lceil(\frac{k+\ell-1}{\ln(1-k!/k^{k})})^{2}\rceil,R_{k-1}(k)\}$,
which solves Conjecture \ref{conj2}. Then, we try to get a more
exact asymptotic solution of $N$ for the special case $k=3$. Using
the Chernoff Bound, we obtain that if $n\geq N=\max
\{6,\lceil\frac{9(\ell-1)}{2(1-\varepsilon)}+3\rceil$\}, where
$0<\varepsilon<1$, then $rx_{3,\ell}(K_{n})=3$; moreover the bound
$\frac{9}{2}\ell+o(\ell)$ is asymptotically the best possible for
$N$ in Conjecture \ref{conj1}.

\end{document}